\providecommand{\U}[1]{\protect\rule{.1in}{.1in}}
\newtheorem{theorem}{Theorem}
\newtheorem{definition}[theorem]{Definition}
\newtheorem{lemma}[theorem]{Lemma}
\newenvironment{proof}[1][Proof]{\noindent\textbf{#1.} }{\ \rule{0.5em}{0.5em}}
\definecolor{AnthonyComments}{HTML}{00BFFF}
\begin{document}
\sloppy
\captionsetup[figure]{labelfont={bf},name={Fig.},labelsep=none}

\title{A Short Proof of the Symmetric Determinantal Representation of Polynomials}
\author{Anthony Stefan\footnote{Email: astefan2015@my.fit.edu}\; and Aaron Welters\footnote{Email: awelters@fit.edu}\\Florida Institute of Technology\\Department of Mathematical Sciences\\Melbourne, FL, USA}
\date{\today}
\maketitle

\begin{abstract}
We provide a short proof of the theorem that every real multivariate polynomial has a symmetric determinantal representation, which was first proved in J. W. Helton, S. A. McCullough, and V. Vinnikov, \textit{Noncommutative convexity arises from linear matrix inequalities}, J. Funct. Anal. 240 (2006), 105-191. We then provide an example using our approach and extend our results from the real field $\mathbb{R}$ to an arbitrary field $\mathbb{F}$ different from characteristic $2$. The new approach we take is only based on elementary results from the theory of determinants, the theory of Schur complements, and basic properties of polynomials.
\end{abstract}

\section{Introduction}
As was first proven in 2006 by J. W. Helton, S. McCullough, and V. Vinnikov \cite{06HMV}, any real polynomial $p(z)\in\mathbb{R}[z]$ in $n$ variables [$z=(z_1,\ldots, z_n)$] has a \textit{symmetric determinant representation}, i.e., there exists an affine linear matrix pencil $A_0+\sum_{i=1}^nz_iA_i$ with symmetric matrices $A_0,\ldots, A_n\in \mathbb{R}^{m\times m}$ such that
\begin{align}
p(z)=\det \left(A_0+\sum_{i=1}^nz_iA_i\right).
\end{align}
We give a new proof of this theorem, which we will refer to as the \textit{HMV Theorem}. 

One of the merits of our proof that makes it short and elementary is that it requires no prior knowledge of multidimensional systems theory (as compared to \cite{06HMV} and \cite{12RQ}) or advanced representation theory for multivariate polynomials (as compared to \cite{11BG}). For instance, the proof of the HMV theorem in both \cite{06HMV} and \cite{12RQ}, it is necessary to know or first prove that an arbitrary real polynomial has a ``linear description." Thus, in \cite[p. 106]{06HMV} they say that ``Our determinantal representation theorem is a bi-product of the theory of systems realizations of noncommutative rational functions..." (cf. Theorem 14.1 and its proof in \cite{06HMV}), whereas R. Quarez in \cite{12RQ} gives a more elementary proof of the HMV Theorem, yet has to first derive a result on symmetrizable unipotent linear descriptions for homogeneous polynomials (see Proposition 4.2 and the proof of Theorem 4.4 in \cite{12RQ}). 

An alternative proof of the HMV Theorem was given by B. Grenet et al. in \cite{11BG} which is based on symmetrizing the algebraic complexity theoretic construction by L. Valiant in \cite{79LV}. More precisely, it is proved in \cite{79LV} that every polynomial has a nonsymmetric determinantal representation which is proved using a weighted digraph construction and then in \cite{11BG} they use a similar yet symmetrized construction to get a symmetric determinantal representation to prove the HMV Theorem. As such, the proof in \cite{11BG} requires a bit of effort to derive the HMV Theorem as there is quite a bit of prior knowledge needed to do this construction, especially if you are not familiar with algebraic complexity theory.

In this paper we give a short proof of the HMV Theorem (see Theorem \ref{ThmSymmDetRepr} and its proof) using some elementary results from the theory of determinants and the theory of Schur complements (see Lemmas \ref{LemDetDirectSum}, \ref{LemSchursDetFormula}, \ref{LemSumOfSchurComps}, and \ref{LemRealizSimpleProducts}) and basic properties of polynomials (see Lemma \ref{LemmaPolyRealization}). We then provide an example that not only demonstrates our approach, but also illustrates the known problem of the HMV theorem not being true in a field of characteristic $2$ (e.g., $\mathbb{F}_2$). As such, we provide a discussion on this and then extend our results to arbitrary fields different from characteristic $2$.

The rest of the paper will proceed as follows. In Sec. \ref{SecPrelims} we establish the notation, definitions, and preliminary results used in the paper. In Sec. \ref{SecSymmDetRepr} we prove the HMV Theorem. In Sec. \ref{SecExample} we provide an example using our approach. In Sec. \ref{SecExtensions} we discuss the extension of the HMV Theorem to arbitrary fields different from characteristic $2$. Finally, in Sec. \ref{SecAppendix} we provide an Appendix with auxiliary results.

\section{Preliminaries}\label{SecPrelims}
We will denote any matrix $A\in\mathbb{R}^{m\times m}$ that is partitioned in $2\times2$ block matrix form by 
\[A = [A_{ij}]_{i,j=1,2} = \begin{bmatrix}
A_{11} &A_{12} \\ A_{21} &A_{22}
\end{bmatrix}=\left[\begin{array}{c; {2pt/2pt} c }
A_{11} & A_{12} \\\hdashline[2pt/2pt]
A_{21} & A_{22}
\end{array}\right],\] 
where the matrix $A_{ij}\in \mathbb{R}^{m_i\times m_j}$ is called the $(i,j)$-block of $A$. 

The direct sum $A\oplus B$ of two square matrices $A\in\mathbb{R}^{k\times k}$ and $B\in\mathbb{R}^{p\times p}$ is defined to be the matrix $A\oplus B\in\mathbb{R}^{(k+p)\times (k+p)}$ with the $2\times2$ block matrix form
\[
A\oplus B=%
\begin{bmatrix}
A & 0\\
0 & B
\end{bmatrix}
.
\]
The following result is well-known (and is true for matrices with entries arbitrary field $\mathbb{F}$ not just $\mathbb{R}$, see \cite[Sec. 4.3, Exercise 21]{02FIS}).
\begin{lemma}[Determinant of a direct sum]\label{LemDetDirectSum}
If $A$ and $B$ are real symmetric matrices then the direct sum $A\oplus B$ is a real symmetric matrix and
\begin{align}
    \det(A\oplus B)=(\det A)(\det B).
\end{align}
\end{lemma}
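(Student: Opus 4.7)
The symmetry claim is immediate and I would dispose of it in one line: since transposition respects block structure and the off-diagonal blocks of $A\oplus B$ are zero, one has $(A\oplus B)^T = A^T\oplus B^T = A\oplus B$ by the hypothesis that $A$ and $B$ are symmetric. So the only substantive content is the multiplicativity of the determinant across a direct sum.

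My preferred plan for the determinant identity is induction on the size $p$ of the second summand $B$. In the base case $p=1$, with $B=[b]$, I would cofactor-expand $\det(A\oplus B)$ along the last row of $A\oplus B$. Only the entry in position $(k+1,k+1)$, namely $b$, is nonzero in that row, and the corresponding $k\times k$ minor is exactly $A$, yielding $b\cdot \det A = (\det A)(\det B)$. For the inductive step, I would again cofactor-expand along the last row of $A\oplus B$: every nonzero entry in that row lies among the last $p$ columns and comes from $B$, and each associated $(k+p-1)\times(k+p-1)$ minor is itself the direct sum of $A$ with a $(p-1)\times(p-1)$ submatrix of $B$. Applying the induction hypothesis to pull the factor $\det A$ out of every term reduces the remaining sum, signs and all, precisely to the cofactor expansion of $\det B$ along its last row. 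This gives $\det(A\oplus B) = (\det A)(\det B)$.

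An equally elementary alternative would be to invoke the Laplace expansion theorem along the first $k$ rows of $A\oplus B$: the only $k\times k$ column index set $J$ for which the corresponding submatrix of the first $k$ rows is nonsingular is $J=\{1,\dots,k\}$, which forces the sum to collapse to the single term $\det A\cdot \det B$ with sign $+1$. I would include this as a remark but prefer the induction argument, since it requires no appeal beyond ordinary cofactor expansion along a single row.

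There is no real obstacle here; the only point requiring a moment's care is bookkeeping the cofactor signs in the inductive step, in order to verify that the signs coming from expanding along the last row of $A\oplus B$ line up exactly with those coming from the corresponding expansion of $\det B$. Since the pivot column indices differ only by the constant shift $k$ and the pivot row is the last row in both expansions, the signs match without further adjustment, and the induction closes cleanly.
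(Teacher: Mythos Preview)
Your argument is correct: the symmetry claim is indeed immediate from $(A\oplus B)^T = A^T\oplus B^T$, and your induction on the size of $B$ via cofactor expansion along the last row is a clean and complete proof of the determinant identity, with the sign bookkeeping handled correctly since the column-index shift by $k$ does not alter the parity.

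As for comparison with the paper: there is nothing to compare, because the paper does not prove this lemma at all. It simply records the result as well-known and points to a textbook exercise (\cite[Sec.~4.3, Exercise~21]{02FIS}). Your write-up therefore goes beyond what the paper does by actually supplying a proof. Either of your two suggested arguments (row-by-row cofactor induction or a single Laplace expansion along the first $k$ rows) would be a perfectly acceptable way to fill in what the paper leaves as a citation.
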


The Schur complement of a matrix $A=[A_{ij}]_{i,j=1,2}$ with respect to $A_{22}$ [i.e., with respect to its $(2,2)$-block $A_{22}$] will be denoted as $A/A_{22}$ and defined by 
\[ A /A_{22} = A_{11} - A_{12} A_{22}^{-1} A_{21},\]
whenever the matrix $A_{22}$ is invertible. The following result is also well-known (and is true for matrices with entries in an arbitrary field $\mathbb{F}$ not just $\mathbb{R}$, see \cite[p. 19, Theorem 1.1]{05FZ}).
\begin{lemma}[Schur's determinant formula]\label{LemSchursDetFormula}
If $A=[A_{ij}]_{i,j=1,2}\in\mathbb{R}^{N\times N}$ and $A_{22}$ is invertible then
\begin{align}
    \det A = \det(A_{22})\det (A/A_{22}).
\end{align}
\end{lemma}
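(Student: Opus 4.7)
The plan is to prove the formula by means of the block $LDU$ factorization of $A$. Assuming $A_{22}$ is invertible, I claim that
\begin{equation*}
A = \begin{bmatrix} I_{m_1} & A_{12}A_{22}^{-1} \\ 0 & I_{m_2} \end{bmatrix} \begin{bmatrix} A/A_{22} & 0 \\ 0 & A_{22} \end{bmatrix} \begin{bmatrix} I_{m_1} & 0 \\ A_{22}^{-1}A_{21} & I_{m_2} \end{bmatrix},
\end{equation*}
where $I_{m_1}$ and $I_{m_2}$ denote identity matrices of the appropriate sizes, and the middle factor is precisely the direct sum $(A/A_{22}) \oplus A_{22}$ in the sense of the preceding section.

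First I would verify this identity by a direct block multiplication. Multiplying the last two factors produces a block-lower-triangular matrix with $A/A_{22}$ and $A_{22}$ on the diagonal and $A_{21}$ in the $(2,1)$-block; premultiplying by the first (block upper triangular) factor then yields the $(1,1)$-block $A/A_{22} + A_{12}A_{22}^{-1}A_{21} = A_{11}$ by the very definition of the Schur complement, while the remaining blocks come out to $A_{12}$, $A_{21}$, and $A_{22}$ as required. Next I would take determinants of both sides: by Lemma \ref{LemDetDirectSum}, $\det\bigl((A/A_{22}) \oplus A_{22}\bigr) = \det(A/A_{22})\det(A_{22})$, so it remains only to argue that each of the two outer block-triangular factors has determinant $1$.

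This last point is the main (if modest) obstacle. It can be handled by recognizing each outer factor as a finite product of elementary row-operation matrices of the type that adds a multiple of one row to another; such matrices are well known to have determinant $1$, so their product does as well. Alternatively, one could perform repeated cofactor expansion along the rows of zeros adjacent to the $I_{m_2}$ block. Either way, combining multiplicativity of the determinant with the factorization above gives $\det A = \det(A/A_{22}) \det(A_{22})$, which is the desired identity.
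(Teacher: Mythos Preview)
Your argument is correct: the block $LDU$ factorization you wrote down is easily verified by the block multiplication you describe, the outer factors are unipotent block-triangular and hence have determinant $1$, and the middle factor has determinant $\det(A/A_{22})\det(A_{22})$ by Lemma~\ref{LemDetDirectSum}, giving the result. Note, however, that the paper does not actually supply its own proof of this lemma---it is stated as well known and referenced to \cite[p.~19, Theorem~1.1]{05FZ}; the block-factorization argument you give is precisely the standard one that appears in that reference, so there is no meaningful difference in approach to report.
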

The next result is also well-known (see, for instance, \cite[p. 16, Lemma 6]{20SW} although the statement and proof is the valid for any field $\mathbb{F}$ including $\mathbb{R}$).
\begin{lemma}[Sum of a Schur complement with matrix]\label{LemSumOfSchurComps}
If $B\in\mathbb{R}^{m\times m}$, $A=[A_{ij}]_{i,j=1,2}\in\mathbb{R}^{N\times N}$, and $A_{22}$ is invertible with $A/A_{22}\in\mathbb{R}^{m\times m}$ then,
\begin{align}
C/C_{22}=A/A_{22}+B,
\end{align}
where $C=[C_{ij}]_{i,j=1,2}\in\mathbb{R}^{N \times N}$ is partitioned in $2\times2$ block matrix form as
\begin{equation}
C=\begin{bmatrix}
C_{11} & C_{12}\\
C_{21} & C_{22}
\end{bmatrix}=
\begin{bmatrix}
A_{11}+B & A_{12}\\
A_{21} & A_{22}\\
\end{bmatrix}\label{LemSumSchurComplPlusMatrixCMatrixFormula}
\end{equation}
and $C_{22}=A_{22}$ is invertible. Moreover, if both matrices $A$ and $B$ are symmetric then the matrix $C$ is symmetric.
\end{lemma}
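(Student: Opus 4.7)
The plan is to prove this lemma directly from the definition of the Schur complement, since the identity $C/C_{22} = A/A_{22} + B$ is essentially a one-line algebraic substitution. The only things that need careful checking are (a) that the dimensions line up so that the sum $A_{11} + B$ makes sense, and (b) that $C_{22}$ is invertible so that its Schur complement is even defined.

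First I would unpack the dimensions. Since $A/A_{22} = A_{11} - A_{12}A_{22}^{-1}A_{21} \in \mathbb{R}^{m\times m}$, the $(1,1)$-block $A_{11}$ must itself lie in $\mathbb{R}^{m\times m}$, and hence $A_{11} + B$ is well-defined with $B \in \mathbb{R}^{m\times m}$. The block structure of $C$ in \eqref{LemSumSchurComplPlusMatrixCMatrixFormula} therefore yields a matrix in $\mathbb{R}^{N\times N}$ with $C_{22} = A_{22}$, so $C_{22}$ is invertible by hypothesis.

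Next I would apply the definition of Schur complement to $C$:
\[
C/C_{22} = C_{11} - C_{12}C_{22}^{-1}C_{21} = (A_{11}+B) - A_{12}A_{22}^{-1}A_{21} = (A_{11} - A_{12}A_{22}^{-1}A_{21}) + B = A/A_{22} + B,
\]
which is the desired identity.

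Finally, for the symmetry claim, I would use that $A$ symmetric is equivalent to $A_{11} = A_{11}^T$, $A_{22} = A_{22}^T$, and $A_{21} = A_{12}^T$. Then with $B = B^T$, one reads off from \eqref{LemSumSchurComplPlusMatrixCMatrixFormula} that $C_{11} = A_{11} + B$ is symmetric, $C_{22} = A_{22}$ is symmetric, and $C_{21} = A_{21} = A_{12}^T = C_{12}^T$, so $C = C^T$. There is no real obstacle here; the entire lemma is a bookkeeping exercise built to isolate a reusable algebraic fact that will be applied repeatedly later in the determinantal representation construction.
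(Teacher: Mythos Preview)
Your proof is correct: the identity follows immediately by plugging the blocks of $C$ into the definition of the Schur complement, and your dimension and symmetry checks are fine. The paper does not actually give a proof of this lemma; it simply cites it as a well-known result from \cite[p.~16, Lemma~6]{20SW}, so your direct verification is exactly the intended elementary argument.
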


The next lemma is proved in the Appendix (in fact, the lemma is actually valid for any field $\mathbb{F}$ different from characteristic $2$, but the proof that we give for $\mathbb{R}$ is much shorter; for more details on the proof of this for such fields $\mathbb{F}$ see Sec. \ref{SecExtensions}).
\begin{lemma}[Realization of simple products]\label{LemRealizSimpleProducts}
The matrix polynomial $uvB$ in any two variables $u$ and $v$ with symmetric matrix $B\in\mathbb{R}^{m\times m}$, has a Bessmertny\u{\i} realization
\begin{equation}
A(u,v)/A_{22}(u,v)=uvB,
\end{equation}
with an affine linear matrix pencil
\begin{equation}
A(u,v)=A_0+uA_1+vA_2=\begin{bmatrix}
A_{11}(u,v) & A_{12}(u,v)\\
A_{21}(u,v) & A_{22}(u,v)
\end{bmatrix},
\end{equation}
such that, for some $N\in\mathbb{N}$, the matrices $A_j\in\mathbb{R}^{N\times N}$ for each $j=0,1,2$ are symmetric and $A_{22}(u,v)$ is a real constant invertible matrix.
\end{lemma}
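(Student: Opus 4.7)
The plan is to exhibit, for the given symmetric matrix $B\in\mathbb{R}^{m\times m}$, an explicit symmetric affine linear matrix pencil $A(u,v)$ of size $N=3m$, partitioned as a $2\times 2$ block matrix with $A_{11}$ of size $m\times m$ and $A_{22}$ of size $2m\times 2m$ constant and invertible, and then to verify the Schur complement identity $A/A_{22}=uvB$ by a direct calculation.

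The key conceptual point is to produce a bilinear $uv$ term (rather than $u^2$ or $v^2$) inside the Schur complement $A_{11}-A_{12}A_{22}^{-1}A_{21}$. If $A_{22}$ were block diagonal and $A_{12}=[\,uM,\,vN\,]$, the product $A_{12}A_{22}^{-1}A_{12}^{T}$ would contribute only $u^2$ and $v^2$ terms, so no $uv$ cross term could arise. I will therefore take $A_{22}$ to be a constant \emph{block-antidiagonal} matrix of the form
\[
A_{22}=\begin{bmatrix}0 & cI_m\\ cI_m & 0\end{bmatrix}
\]
for a nonzero scalar $c\in\mathbb{R}$; its inverse is then also block-antidiagonal, which has the effect of swapping the $u$- and $v$-coefficient blocks inside $A_{12}A_{22}^{-1}A_{21}$ and thus manufactures exactly a $uv$ contribution.

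With this structural choice in hand, I will set $A_{11}=0_{m\times m}$, $A_{12}(u,v)=[\,uB,\,vI_m\,]$, and $A_{21}=A_{12}^{T}$ (the latter being forced by symmetry of $A$). A direct block multiplication then gives $A_{12}A_{22}^{-1}A_{21}=c^{-1}uv(B+B^{T})=2c^{-1}uvB$, where the last equality uses $B^{T}=B$. Choosing $c=-2$ yields $A/A_{22}=0-(-uvB)=uvB$, and the resulting pencil
\[
A(u,v)=\begin{bmatrix}0_{m\times m} & uB & vI_m\\ uB & 0_{m\times m} & -2I_m\\ vI_m & -2I_m & 0_{m\times m}\end{bmatrix}
\]
is manifestly symmetric, affine linear in $(u,v)$, and its $(2,2)$-block is a constant symmetric invertible matrix. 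Writing $A(u,v)=A_0+uA_1+vA_2$ then gives three symmetric real $3m\times 3m$ coefficient matrices, as required.

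The only substantive step is the ansatz itself: recognizing that a block-antidiagonal constant $A_{22}$ coupled with the off-diagonal column $[\,uB,\,vI_m\,]$ is what produces the bilinear term, and that symmetry of $B$ allows $B+B^{T}$ to collapse to $2B$ so that a single scalar $c$ suffices to normalize. Once this ansatz is fixed, every remaining check (symmetry, invertibility of $A_{22}$, affine linearity in $(u,v)$, and the Schur complement identity) reduces to a short direct computation, so there is no real obstacle beyond writing down the right candidate.
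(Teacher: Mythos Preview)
Your proof is correct and takes a genuinely different route from the paper's. The paper's argument proceeds by polarization: it first realizes $u^{2}B$ (for invertible $B$) as the Schur complement of $\begin{bmatrix}0 & uI_m\\ uI_m & -B^{-1}\end{bmatrix}$ with respect to $-B^{-1}$, then writes $uvB=\bigl(\tfrac{1}{2}(u+v)\bigr)^{2}B-\bigl(\tfrac{1}{2}(u-v)\bigr)^{2}B$ and combines the two realizations using the sum-of-Schur-complements lemma; the non-invertible case is handled separately by splitting $B=(B-\lambda_0 I_m)+\lambda_0 I_m$ into two invertible summands. Your block-antidiagonal ansatz for $A_{22}$ sidesteps all of this: it produces the bilinear term directly, requires no hypothesis on the invertibility of $B$ (so no case split), and yields a realization of size $3m$ uniformly, whereas the paper's construction grows to size $5m$ when $B$ is singular. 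Both approaches need $2$ to be a unit (you through $c=-2$, the paper through the $\tfrac{1}{2}$ in the polarization identity), so neither extends to characteristic $2$; but your construction is otherwise field-independent and would in fact shorten the paper's Section~5 extension, which has to work around the use of $B^{-1}$ over finite fields.
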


Although the next lemma (which is the key to the short proof of the HMV Theorem in this paper) and its proof is somewhat obvious and likely known, we provide a proof of it in the Appendix for completeness (in fact, the statement and proof is valid over any field $\mathbb{F}$ not just for $\mathbb{R}$). Before we state it, we need the following definition.
\begin{definition}[Simple product substitution]\label{DefSimpleProdSubst}
For any real polynomial $q=q(z)\in\mathbb{R}[z]$ in $n$ variables $z=(z_1,\ldots, z_n)$, any $k\in \{1,\ldots,n\}$, and any pair of variables $u$ and $v$, the real polynomial $p=q(z)|_{z_k=uv}$ obtained from $q(z)$ by making the substitution $z_k=uv$ is called a \textbf{simple product substitution} on $q$.
\end{definition}

\begin{lemma}[Polynomial realization]\label{LemmaPolyRealization}
Each real polynomial $p$ can be constructed from an affine linear real polynomial $q$ by applying a finite number of simple product substitutions to $q$.
\end{lemma}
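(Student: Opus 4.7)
The plan is a direct construction: starting from the target polynomial $p$, exhibit an affine linear $q$ and a concrete sequence of simple product substitutions that transform $q$ into $p$. The key observation is that every monomial of total degree $d\ge 2$ can be built up from a single fresh variable by $d-1$ successive simple product substitutions, and that the distinct high-degree monomials appearing in $p$ can be built up independently in parallel provided they are each represented initially by their own fresh variable.

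Concretely, I would first write $p$ in its canonical expansion
$$p(z_1,\ldots,z_n)=c_0+\sum_{i=1}^n a_i z_i+\sum_{j=1}^m b_j M_j(z),$$
where $M_1,\ldots,M_m$ enumerate the distinct monomials of $p$ of total degree at least $2$. If $m=0$ then $p$ is already affine linear and taking $q:=p$ (with no substitutions) finishes the proof. Otherwise, I introduce fresh variables $w_1,\ldots,w_m$ and define the affine linear polynomial
$$q(z_1,\ldots,z_n,w_1,\ldots,w_m):=c_0+\sum_{i=1}^n a_i z_i+\sum_{j=1}^m b_j w_j.$$
For each $j$ I factor $M_j=y_{j,1}y_{j,2}\cdots y_{j,d_j}$ with $d_j=\deg M_j\ge 2$ and each $y_{j,k}\in\{z_1,\ldots,z_n\}$ (with repetitions when $M_j$ contains a variable to a power greater than $1$). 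If $d_j=2$, a single simple product substitution $w_j=y_{j,1}y_{j,2}$ replaces $w_j$ by $M_j$. If $d_j\ge 3$, I introduce fresh auxiliary variables $t_{j,1},\ldots,t_{j,d_j-2}$ and perform the chain of simple product substitutions $w_j=y_{j,1}t_{j,1}$, then $t_{j,k}=y_{j,k+1}t_{j,k+1}$ for $k=1,\ldots,d_j-3$, and finally $t_{j,d_j-2}=y_{j,d_j-1}y_{j,d_j}$, so that after these $d_j-1$ substitutions the term $b_jw_j$ becomes $b_jM_j$.

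Running this construction for every $j=1,\ldots,m$ recovers $p$ from $q$ in finitely many simple product substitutions. The only subtlety is bookkeeping. Since the $w_j$'s and all auxiliary $t_{j,k}$'s are chosen pairwise distinct and distinct from $z_1,\ldots,z_n$, the substitutions performed for different indices $j$ cannot interfere with one another. One also needs to observe that Definition \ref{DefSimpleProdSubst} permits the variables $u,v$ in a simple product substitution $z_k=uv$ to be already-present variables of the current polynomial (and even to coincide with one another); this is precisely what makes it possible to bind the last step of each chain to existing $z_i$'s and to realize repeated factors such as $z_i^2$ via a substitution of the form $z_k=z_iz_i$.
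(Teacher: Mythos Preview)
Your argument is correct. The reading of Definition~\ref{DefSimpleProdSubst} that you rely on---that the pair $u,v$ may be fresh, may be already-present variables, and may even coincide---is the intended one, and with it your chain substitutions $w_j=y_{j,1}t_{j,1}$, $t_{j,1}=y_{j,2}t_{j,2}$, \ldots, $t_{j,d_j-2}=y_{j,d_j-1}y_{j,d_j}$ legitimately transform each placeholder $w_j$ into the monomial $M_j$, and the disjointness of the auxiliary variables guarantees the different chains do not interfere.

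The paper's proof arrives at the same conclusion by a different organizational scheme: it inducts on the total degree $d$ of $p$. At the inductive step it writes down an auxiliary polynomial $r(z,w)=q_d(z)+\sum_k b_k z^{\alpha_k}w_k$ of degree $d$ (with $z^{\alpha_k}$ running over degree-$(d-1)$ monomials), invokes the induction hypothesis to realize $r$ from an affine linear $q$, and then performs one additional batch of substitutions $w_k=z_iz_j$ to reach degree $d+1$. Thus the paper advances all monomials in lockstep, one degree at a time, while you treat each high-degree monomial independently with its own chain of auxiliaries. Your direct construction is arguably more transparent and yields an explicit bound $\sum_j(d_j-1)$ on the number of substitutions; the paper's induction is slightly more uniform but hides the per-monomial structure. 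Both approaches are elementary and require no ideas beyond bookkeeping.
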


\section{Symmetric Determinantal Representation}\label{SecSymmDetRepr}
In this section we will prove the HMV Theorem over the field $\mathbb{R}$ (i.e., Theorem \ref{ThmSymmDetRepr}). To do this we will need the following two lemmas (the first lemma and its proof are valid over any field $\mathbb{F}$ not just $\mathbb{R}$; the second lemma and its proof are valid over any field $\mathbb{F}$ different from characteristic $2$).

\begin{lemma}[Scalar multiplication]\label{LemScalarMultDetSymDetRepr}
If $c$ is a real number and $p(z)\in\mathbb{R}[z]$ has a symmetric determinantal representation then $cp(z)$ also has a symmetric determinantal representation.
\end{lemma}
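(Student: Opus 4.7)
The plan is to exploit the determinant of a direct sum (Lemma \ref{LemDetDirectSum}) in the simplest possible way: prepend a $1\times 1$ block carrying the scalar $c$. Starting from a symmetric determinantal representation $p(z)=\det\bigl(A_0+\sum_{i=1}^{n}z_iA_i\bigr)$ with symmetric $A_i\in\mathbb{R}^{m\times m}$, I would define the new coefficient matrices
\[
B_0=[c]\oplus A_0,\qquad B_i=[0]\oplus A_i\quad (i=1,\ldots,n),
\]
where $[c]$ and $[0]$ denote the $1\times 1$ matrices with single entry $c$ and $0$, respectively. Each $B_i$ is a direct sum of two symmetric matrices and is therefore a symmetric matrix in $\mathbb{R}^{(m+1)\times (m+1)}$.

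Next, I would observe that because direct sum distributes over the affine linear pencil, one has
\[
B_0+\sum_{i=1}^{n}z_iB_i=[c]\oplus\Bigl(A_0+\sum_{i=1}^{n}z_iA_i\Bigr),
\]
and a single application of Lemma \ref{LemDetDirectSum} then yields
\[
\det\Bigl(B_0+\sum_{i=1}^{n}z_iB_i\Bigr)=c\cdot\det\Bigl(A_0+\sum_{i=1}^{n}z_iA_i\Bigr)=c\,p(z),
\]
which is the desired symmetric determinantal representation of $cp(z)$.

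There is essentially no obstacle: the entire argument reduces to invoking Lemma \ref{LemDetDirectSum} once. The only point worth flagging is why I avoid the more tempting route of conjugation by a diagonal matrix $D$ — writing $D(A_0+\sum z_iA_i)D$ preserves symmetry but multiplies the determinant by $\det(D)^2$, so it can only realize \emph{nonnegative} scalar multiples and would force us into square roots (problematic over general fields). Treating $c$ as its own $1\times 1$ symmetric block sidesteps the sign issue entirely and, as a bonus, makes it clear that the same proof goes through verbatim over any field $\mathbb{F}$, consistent with the remark preceding the lemma statement.
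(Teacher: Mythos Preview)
Your proof is correct and follows essentially the same approach as the paper: take the direct sum of the given pencil with the $1\times 1$ block $[c]$ and invoke Lemma~\ref{LemDetDirectSum}. Your identification of the coefficient matrices $B_0=[c]\oplus A_0$ and $B_i=[0]\oplus A_i$ for $i\ge 1$ is in fact more carefully stated than the paper's own write-up, and your closing remark about why conjugation by a diagonal matrix would not work is a nice addition.
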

\begin{proof}
 Suppose $c\in \mathbb{R}$ and $p(z) = \det\left[A(z)\right]$, where $A(z)=A_0+\sum_{i=1}^nz_iA_i$ is an affine linear matrix pencil with symmetric matrices $A_0,\ldots, A_n\in\mathbb{R}^{m\times m}$. Then it follows from Lemma \ref{LemDetDirectSum} that
 \begin{align*}
     B(z)=A(z)\oplus [c]=A_0\oplus [c]+\sum_{i=1}^nz_i(A_i\oplus [c])
 \end{align*}
 is an affine linear matrix pencil with symmetric matrices $A_i\oplus [c]\in \mathbb{R}^{(m+1)\times (m+1)}$ such that
 \begin{align*}
     cp(z) = \det\left[B(z)\right].
 \end{align*}
 This proves that $cp(z)$ has a symmetric determinantal representation.
\end{proof}

\begin{lemma}[Substitution]\label{LemSubSymDetRep}
If $q=q(z)\in\mathbb{R}[z]$ has a symmetric determinant representation and $p$ is obtained from $q$ by a simple product substitution then $p$ also has a symmetric determinant representation. 
\end{lemma}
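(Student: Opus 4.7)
The plan is to translate the substitution $z_k \mapsto uv$ at the polynomial level into a construction at the matrix pencil level, using the Bessmertny\u{\i} realization of $uvA_k$ provided by Lemma \ref{LemRealizSimpleProducts} to absorb the bilinear term into an enlarged affine linear symmetric pencil via a Schur complement.

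Starting from the hypothesis $q(z) = \det A(z)$ with $A(z) = A_0 + \sum_{i=1}^n z_i A_i$ symmetric and affine linear, I would write $z' = (z_1,\ldots,z_{k-1},z_{k+1},\ldots,z_n)$ and set $B(z') = A_0 + \sum_{i\neq k} z_i A_i$, which is symmetric and affine linear in $z'$. Then
\[
p = q(z)|_{z_k = uv} = \det\bigl(B(z') + uvA_k\bigr),
\]
so the task reduces to rewriting the right-hand side as the determinant of a symmetric affine linear pencil in the variables $(u, v, z')$.

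Next, I would apply Lemma \ref{LemRealizSimpleProducts} to the symmetric matrix $A_k$ to obtain an affine linear symmetric pencil $\tilde{A}(u,v) = \tilde{A}_0 + u\tilde{A}_1 + v\tilde{A}_2$ whose $(2,2)$-block $\tilde{A}_{22}$ is a constant invertible matrix and satisfies $\tilde{A}(u,v)/\tilde{A}_{22} = uvA_k$. Combining this with $B(z')$ via Lemma \ref{LemSumOfSchurComps}, which only modifies the $(1,1)$-block, produces the pencil
\[
C(u,v,z') = \begin{bmatrix} \tilde{A}_{11}(u,v) + B(z') & \tilde{A}_{12}(u,v) \\ \tilde{A}_{21}(u,v) & \tilde{A}_{22} \end{bmatrix},
\]
which is symmetric and affine linear in $(u, v, z')$ because $B(z')$ is independent of $(u,v)$ and $\tilde{A}(u,v)$ is independent of $z'$. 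By Lemma \ref{LemSumOfSchurComps}, $C_{22} = \tilde{A}_{22}$ is invertible and $C/C_{22} = uvA_k + B(z')$, so Lemma \ref{LemSchursDetFormula} yields $\det C = \det(\tilde{A}_{22})\, p$.

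Setting $d = \det(\tilde{A}_{22}) \neq 0$, this shows that $dp$ has a symmetric determinantal representation (namely $\det C$), and a single application of Lemma \ref{LemScalarMultDetSymDetRepr} with the scalar $1/d$ then furnishes the desired symmetric determinantal representation of $p$. The only real obstacle to watch for is ensuring that $C$ remains affine linear across the full variable set; this succeeds precisely because Lemma \ref{LemRealizSimpleProducts} guarantees a \emph{constant} $(2,2)$-block, so adjoining the $z'$-dependent matrix $B(z')$ to the $(1,1)$-block cannot introduce any nonlinear cross-variable terms.
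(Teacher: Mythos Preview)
Your proposal is correct and follows essentially the same approach as the paper: realize $uvA_k$ as a Schur complement via Lemma~\ref{LemRealizSimpleProducts}, add the remaining affine linear part via Lemma~\ref{LemSumOfSchurComps}, apply Schur's determinant formula, and clear the nonzero constant. The only cosmetic difference is that the paper inlines the direct-sum construction $B(z,u,v)\oplus[d^{-1}]$ explicitly, whereas you invoke Lemma~\ref{LemScalarMultDetSymDetRepr} to handle the factor $1/d$; these are the same step.
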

\begin{proof}
Suppose $q=q(z)$ is a real polynomial in $n$ variables $z=(z_1,\ldots, z_n)$ with a symmetric determinant representation
\begin{align*}
    q(z) = \det\left[A(z)\right],
\end{align*}
where $A(z)=A_0+\sum_{i=1}^nz_iA_i$ is an affine linear matrix pencil with symmetric matrices $A_0,\ldots, A_n\in\mathbb{R}^{m\times m}$. Let $p$ be a polynomial obtained from $q$ by a simple product substitution. Then, by definition, there is a $k\in \{1,\ldots,n\}$ and a pair of variables $u$ and $v$ such that $p$ is the real polynomial $p=q(z)|_{z_k=uv}$ in the variables $z, u,v$. Hence,
\begin{align*}
    p=q(z)|_{z_k=uv}=\det\left[A(z)\right]|_{z_k=uv}=\det\left[A(z)|_{z_k=uv}\right].
\end{align*}
It follows immediately from Lemma \ref{LemSumOfSchurComps} and Lemma \ref{LemRealizSimpleProducts} that
\begin{align*}
    A(z)|_{z_k=uv}=uvA_k+\left(A_0+\sum_{i=1, i\not = k}^nz_iA_i\right)=B(z,u,v)/B_{22}(z,u,v),
\end{align*}
where $B(z,u,v)$ is an affine linear matrix pencil of the form
\begin{equation}
B(z,u,v)=\left(B_0+\sum_{i=1,i\not=k}^nz_iB_i\right)+uB_{n}+vB_{n+1}=\begin{bmatrix}
B_{11}(z,u,v) & B_{12}(z,u,v)\\
B_{21}(z,u,v) & B_{22}(z,u,v)
\end{bmatrix},
\end{equation}
such that, for some $N\in\mathbb{N}$, the matrices $B_j\in\mathbb{R}^{N\times N}$ for each $j=0,\ldots, n+1$ are symmetric and $B_{22}(z,u,v)$ is a constant invertible matrix. In particular,
\begin{equation}
B_{22}(z,u,v)\equiv \det B_{22}(0,0,0)=d \in \mathbb{R}\setminus\{0\}.
\end{equation}
Therefore, it follows from this, Lemma \ref{LemDetDirectSum}, and Lemma \ref{LemSchursDetFormula} that
\begin{align*}
    p=\det\left[B(z,u,v)/B_{22}(z,u,v)\right]=\frac{1}{d}\det\left[B(z,u,v)\right]=\det\left\{B(z,u,v)\oplus [d^{-1}]\right\},
\end{align*} 
which is a symmetric determinantal representation for $p$. This completes the proof.
\end{proof}

We are now ready to prove the HMV Theorem as an immediate corollary of Lemma \ref{LemmaPolyRealization} and Lemma \ref{LemSubSymDetRep} (the statement and its proof is valid for any field $\mathbb{F}$ different from characteristic $2$; for more details on this see Sec. \ref{SecExtensions}).
\begin{theorem}[Symmetric determinantal representation]\label{ThmSymmDetRepr}
Any real polynomial $p(z)\in\mathbb{R}[z]$ in $n$ variables $z=(z_1,\ldots, z_n)$ has a symmetric determinant representation, i.e., there exists an affine linear matrix pencil $A_0+\sum_{i=1}^nz_iA_i$ with real symmetric matrices $A_0,\ldots, A_n\in \mathbb{R}^{m\times m}$ such that
\begin{align}
p(z)=\det \left(A_0+\sum_{i=1}^nz_iA_i\right).
\end{align}
\end{theorem}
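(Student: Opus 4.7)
The plan is to reduce the theorem to a straightforward induction that chains together the two main lemmas already proved. Specifically, Lemma \ref{LemmaPolyRealization} tells me that $p$ can be obtained from some affine linear polynomial $q$ by applying a finite sequence of simple product substitutions, say of length $k$. So I would aim to establish by induction on the length $k$ of such a sequence that every polynomial built from an affine linear polynomial by $k$ simple product substitutions admits a symmetric determinantal representation.

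For the base case ($k=0$), I note that any affine linear polynomial $q(w) = c_0 + c_1 w_1 + \cdots + c_m w_m \in \mathbb{R}[w]$ trivially has a symmetric determinantal representation as a $1 \times 1$ matrix pencil: take $A_j = [c_j]$ (a symmetric matrix over $\mathbb{R}$) so that $q(w) = \det\bigl([c_0] + \sum_{j=1}^{m} w_j [c_j]\bigr)$. For the inductive step, I would assume the polynomial $q'$ obtained after $k$ substitutions has a symmetric determinantal representation, and then apply Lemma \ref{LemSubSymDetRep} to the $(k+1)$-th simple product substitution, which immediately yields a symmetric determinantal representation for the next polynomial in the chain.

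Combining the base case and the inductive step, after the full finite sequence of substitutions guaranteed by Lemma \ref{LemmaPolyRealization}, I obtain a symmetric determinantal representation for $p$, completing the proof. The main obstacle, such as it is, is not in this final argument but rather in the machinery already developed earlier, namely the Bessmertny\u{\i}-type realization in Lemma \ref{LemRealizSimpleProducts} and the Schur-complement bookkeeping in Lemma \ref{LemSubSymDetRep} that together show symmetric determinantal representations are closed under simple product substitutions. Given those lemmas in hand, the proof of Theorem \ref{ThmSymmDetRepr} is essentially a one-line corollary: $p$ is built by finitely many substitutions from an affine linear polynomial, and each substitution preserves the existence of a symmetric determinantal representation.
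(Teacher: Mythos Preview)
Your proposal is correct and matches the paper's proof essentially line for line: the paper also treats the affine linear case with the $1\times 1$ pencil $p(z)=\det[p(z)]$, invokes Lemma~\ref{LemmaPolyRealization} to write $p=S_l\cdots S_1 q$ for an affine linear $q$, and then inducts on the number of simple product substitutions using Lemma~\ref{LemSubSymDetRep}. Your observation that the real work lies in Lemmas~\ref{LemRealizSimpleProducts} and~\ref{LemSubSymDetRep}, with Theorem~\ref{ThmSymmDetRepr} being an immediate corollary, is exactly the point the paper makes.
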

\begin{proof}
Let $p=p(z) \in \mathbb{R}[z]$ be a real polynomial in $n$ variables  $z=(z_1,\ldots,z_n)$. If $p$ is an affine linear real polynomial then $p(z)=\det [p(z)]$ is symmetric determinantal representation. Suppose that $p$ is not an affine linear real polynomial. Then by Lemma \ref{LemmaPolyRealization}, there exists an affine linear real polynomial $q$ such that $p=S_l\cdots S_1q$, where $S_k$ is an operation of simple product substitution for each $k=1,\ldots, l$ for some $l\in\mathbb{N}$. By Lemma \ref{LemSubSymDetRep}, $S_1q$ has a symmetric determinantal representation. If $l=1$ then we are done. Thus, assume $l\geq 2$. If $S_k\cdots S_1q$ has a symmetric determinantal representation for some integer $1\leq k< l$ then by Lemma \ref{LemSubSymDetRep}, $S_{k+1}S_k\cdots S_1q=S_{k+1}(S_k\cdots S_1q)$ has a symmetric determinantal representation. Therefore, by induction $p=S_l\cdots S_1q$ has a symmetric determinantal representation. This proves the theorem.
\end{proof}

\section{Example}\label{SecExample}

Here we will use the results in this paper to show how to produce a symmetric determinantal representation for the real polynomial
\begin{align}\label{examplepoly}
    p(z_1,z_2,z_3)=z_1+z_2z_3.
\end{align}
First, (\ref{examplepoly}) can be obtained from the affine linear real polynomial
\begin{align*}
    q(z_1,w_1)=z_1+w_1
\end{align*}
by making the simple product substitution $w_1=z_2z_3$ since
\begin{align*}
    p(z_1,z_2,z_3)=z_1+z_2z_3=q(z_1,w_1)|_{w_1=z_2z_3}.
\end{align*}
Next, $q(z_1,w_1)$ has the symmetric determinantal representation
\begin{align*}
    q(z_1,w_1)=\det\begin{bmatrix}q(z_1,w_1)\end{bmatrix}=\det(z_1[1]+w_1[1])
\end{align*}
and hence,
\begin{gather*}
     p(z_1,z_2,z_3)=z_1+z_2z_3=\det\begin{bmatrix}q_0(z_1,w_1)\end{bmatrix}|_{w_1=z_2z_3}=\det(z_1[1] + z_2z_3[1]).
\end{gather*}
Next, by Lemma \ref{LemSchursDetFormula} and Lemma \ref{LemSumOfSchurComps}, and following the proofs of Lemma \ref{LemRealizSimpleProducts} and Lemma \ref{LemScalarMultDetSymDetRepr} we have
\begin{gather*}
    \det(z_1[1] + z_2z_3[1])\\
    =\det\left\{ [z_1] + \left.\left[\begin{array}{c; {2pt/2pt} c c}
0 & \frac{1}{2}(z_2+z_3) & \frac{1}{2}(z_2 - z_3) \vspace{0.1cm} \\ \hdashline[2pt/2pt] 
\frac{1}{2}(z_2+z_3) & -1 & 0 \\ 
\frac{1}{2}(z_2 - z_3) & 0 & 1
\end{array}\right]\right/ \begin{bmatrix} -1 & 0\\ 0 & 1 \end{bmatrix}\right\}\\
    =\det\left\{\left.\left[\begin{array}{c; {2pt/2pt} c c}
z_1 & \frac{1}{2}(z_2+z_3) & \frac{1}{2}(z_2 - z_3) \vspace{0.1cm} \\ \hdashline[2pt/2pt] 
\frac{1}{2}(z_2+z_3) & -1 & 0 \\ 
\frac{1}{2}(z_2 - z_3) & 0 & 1
\end{array}\right]\right/ \begin{bmatrix} -1 & 0\\ 0 & 1 \end{bmatrix}\right\}\\
=\left(\det \begin{bmatrix} -1 & 0\\ 0 & 1 \end{bmatrix}\right)^{-1}\det\begin{bmatrix}
z_1 & \frac{1}{2}(z_2+z_3) & \frac{1}{2}(z_2 - z_3)  \\
\frac{1}{2}(z_2+z_3) & -1 & 0 \\ 
\frac{1}{2}(z_2 - z_3) & 0 & 1
\end{bmatrix}\\
=(-1)\det\begin{bmatrix}
z_1 & \frac{1}{2}(z_2+z_3) & \frac{1}{2}(z_2 - z_3)  \\
\frac{1}{2}(z_2+z_3) & -1 & 0 \\ 
\frac{1}{2}(z_2 - z_3) & 0 & 1
\end{bmatrix}\\
=\det\left\{\begin{bmatrix}
z_1 & \frac{1}{2}(z_2+z_3) & \frac{1}{2}(z_2 - z_3)  \\
\frac{1}{2}(z_2+z_3) & -1 & 0 \\ 
\frac{1}{2}(z_2 - z_3) & 0 & 1
\end{bmatrix}\oplus [-1]\right\}\\
=\det\begin{bmatrix}
z_1 & \frac{1}{2}(z_2+z_3) & \frac{1}{2}(z_2 - z_3) & 0 \\
\frac{1}{2}(z_2+z_3) & -1 & 0 & 0\\ 
\frac{1}{2}(z_2 - z_3) & 0 & 1 & 0\\
0 & 0 & 0 & -1
\end{bmatrix}.
\end{gather*}
Therefore, the real polynomial $p(z_1,z_2,z_3)=z_1+z_2z_3$ has the symmetric determinantal representation
\begin{align}
   p(z_1,z_2,z_3)=z_1+z_2z_3=\det \left(A_0+z_1A_1+z_2A_2+z_3A_3\right), 
\end{align}
with the affine linear matrix pencil
\begin{align}
   A_0+z_1A_1+z_2A_2+z_3A_3= \begin{bmatrix}
z_1 & \frac{1}{2}(z_2+z_3) & \frac{1}{2}(z_2 - z_3) & 0 \\
\frac{1}{2}(z_2+z_3) & -1 & 0 & 0\\ 
\frac{1}{2}(z_2 - z_3) & 0 & 1 & 0\\
0 & 0 & 0 & -1
\end{bmatrix}
\end{align}
and symmetric matrices $A_0,A_1,A_2,A_3\in\mathbb{R}^{4\times 4}$ given by
\begin{gather}
    A_0=\begin{bmatrix}
    0 & 0 & 0 & 0\\
    0 & -1 & 0 & 0\\
    0 & 0 & 1 & 0\\
    0 & 0 & 0 & -1
    \end{bmatrix},\;
    A_1=\begin{bmatrix}
    1 & 0 & 0 & 0\\
    0 & 0 & 0 & 0\\
    0 & 0 & 0 & 0\\
    0 & 0 & 0 & 0
    \end{bmatrix},\\
    A_2=\begin{bmatrix}
    0 & \frac{1}{2} & \frac{1}{2} & 0\\
    \frac{1}{2} & 0 & 0 & 0\\
    \frac{1}{2} & 0 & 0 & 0\\
    0 & 0 & 0 & 0
    \end{bmatrix},\;
    A_3=\begin{bmatrix}
    0 & \frac{1}{2} & -\frac{1}{2} & 0\\
    \frac{1}{2} & 0 & 0 & 0\\
    -\frac{1}{2} & 0 & 0 & 0\\
    0 & 0 & 0 & 0
    \end{bmatrix}.
\end{gather}

\section{Extensions}\label{SecExtensions}
In this section we will discuss how to extended our proof of the HMV Theorem (i.e., Theorem \ref{ThmSymmDetRepr}) on symmetric determinantal representations of real polynomials to polynomials with coefficients in an arbitrary field $\mathbb{F}$ different from characteristic $2$. 

We will begin in Subsection \ref{SecPreviousResults} with a discussion of some previous results in this regard and compare it to ours. Furthermore, we include a brief discussion on how we modify our proof of the HMV Theorem from the field $\mathbb{R}$ to an arbitrary field $\mathbb{F}$ different from characteristic $2$. Moreover, we explain why the proof of our theorem would fail for fields of characteristic $2$ by pointing out which step in our proof causes the problem and illustrate the issue in the example from Section \ref{SecExample}. Finally, in Subsection \ref{SecExtensionOfHMVThm} we give our short proof of the extension of the HMV Theorem to such fields.

\subsection{Comparison to previous results}\label{SecPreviousResults}
In \cite[Sec. 5.1]{12RQ}, R. Quarez extends the HMV Theorem to polynomials with coefficients in an arbitrary ring $R$ of characteristic different from $2$ (see \cite[Theorem 5.1]{12RQ}). To do this, he has to adjust the steps in his construction to work not only over $\mathbb{R}$, but also over the ring $R$. The main issue he has to resolve is adjusting his proof (mainly of \cite[Theorem 4.4]{12RQ}) to avoid issues with inversion and using the diagonalization theorem for real symmetric matrices. However, these matrices could involve, for instance, square roots and hence not allowable over a general ring (cf. \cite[Theorem 5.1]{12RQ} and its proof). 

In comparison with our paper, although we just treat a field $\mathbb{F}$ different from characteristic $2$ instead of a ring $R$ (for the sake of simplicity), we only have to adjust one step in our proof of the HMV Theorem. The adjustment we have to make is in the proof of Lemma \ref{LemRealizSimpleProducts}, namely, for the case when the symmetric matrix $B\in\mathbb{F}^{m\times m}$ is not invertible. This is due to the issue that it is not always possible to find a scalar $\lambda_0\in\mathbb{F}\setminus\{0\}$ such that $B-\lambda_0 I_m$ is invertible. This issue would not occur if $\mathbb{F}$ is an infinite field (i.e., a field of characteristic $0$) as $B$ then can only have a finite number of eigenvalues \cite[p. 249, Theorem 5.3]{02FIS}, but it is an issue when $\mathbb{F}$ is a finite field (i.e., a field of characteristic $p$, for some prime number $p$). To see this, let $\mathbb{F}$ be a finite field. Then, it has $p^k$ elements in it for some prime number $p$ (so that $\mathbb{F}$ has characteristic $p$) and some integer $k\geq 1$. Let $0,\lambda_1,\ldots, \lambda_{p^k-1}$ be all the distinct elements of $\mathbb{F}$ and consider the diagonal matrix $B=\operatorname{diag}\{0,\lambda_1,\ldots, \lambda_{p^k-1}\}\in \mathbb{F}^{p^k\times p^k}$. This is a symmetric matrix in $\mathbb{F}^{p^k\times p^k}$ that is not invertible such that $B-\lambda_0 I_{p^k}$ is not invertible for every scalar $\lambda_0\in\mathbb{F}\setminus\{0\}$.

In \cite{11BG}, B. Grenet et al. gives a proof of the HMV Theorem using a construction that, from the very beginning, was meant to be valid for any field of characteristic different from $2$. As they point out (see \cite[Sec. 5]{11BG}), their constructions are not valid for fields of characteristic $2$ because they need to use the scalar $1/2$ (i.e., the multiplicative inverse of $2$ which, of course, does not exist in a field of characteristic $2$).

Comparing our paper, the reason that our construction will fail as well, for a field $\mathbb{F}$ of characteristic $2$, is for a similar reason as in \cite{11BG}. More precisely, the only issue in our proof of the HMV Theorem is in the proof of Lemma \ref{LemRealizSimpleProducts}, where in order to realize the simple product $uvB$ for a symmetric matrix $B\in\mathbb{F}^{m\times m}$, we need to use the scalar $1/2$.

In \cite{13GMT}, B. Grenet, T. Monteil, and S. Thomass\'{e} study the problem of the symmetric determinantal representations of polynomials with coefficients in fields with characteristic $2$, i.e., on the extension of the HMV Theorem to such fields. One of the motivations for their paper was to prove a conjecture from \cite{11BG} that such representations do not always exist for such fields. This conjecture is proven in \cite[Sec. 4]{13GMT} and, in particular, \cite[Sec. 4.2]{13GMT} gives as an example the polynomial $xy+z$ in the three variables $x,y,z$ from the field $\mathbb{F}_2$, the field with two elements, and prove it has no symmetric determinantal representation. More specifically, they prove by their result \cite[Theorem 4.2]{13GMT}, that the polynomial $p(x,y,z)=xy+z$ can \textit{not} be represented as the determinant of a symmetric matrix with entries in $\mathbb{F}_2\cup \{x,y,z\}$.

Therefore, if you compare this to our example from Section \ref{SecExample}, since we need to use the scalar $1/2$ as well, we face the same issue if our field was indeed characteristic $2$.

\subsection{Extension of the HMV Theorem}\label{SecExtensionOfHMVThm}
In this subsection we will prove an extension of the HMV Theorem from the field $\mathbb{R}$ (i.e., Theorem \ref{ThmSymmDetRepr}) to an arbitrary field $\mathbb{F}$ different from characteristic $2$ (i.e., Theorem \ref{ThmSymmDetReprExt}). We will need the following three elementary lemmas (which are true over any field $\mathbb{F}$ with any characteristic).

The following lemma is well-known (see, for instance, \cite[p. 123, Theorem 3]{72PP} as proved in \cite[p. 57, Theorem 1.1$^\prime$]{66KP}, cf. \cite[pp. 48-49, Theorem 1.1]{66KP} and its proof).
\begin{lemma}[Rank factorization]\label{LemRankCongruenceFactorization}
If $\mathbb{F}$ is a field and $A\in \mathbb{F}^{m\times m}$ is a symmetric matrix with rank $r\geq 1$ then there exists an invertible matrix $Y\in \mathbb{F}^{m\times m}$ and a invertible symmetric matrix $B\in \mathbb{F}^{r\times r}$ such that
\begin{align}\label{RankFactorizationFormula}
    A=Y^T\begin{bmatrix}
    B & 0\\
    0 & 0_{m-r}
    \end{bmatrix}Y,
\end{align}
where the zero matrices bordering $B$ are absent if $r=m$.
\end{lemma}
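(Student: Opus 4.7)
The plan is to argue by induction on $m$, reducing $A$ to the claimed block-diagonal form one pivot at a time by symmetric Gaussian elimination (i.e., congruences $A\mapsto M^TAM$ with $M$ invertible). The base case $m=1$ is immediate: since $r\geq 1$, we have $A=[a]$ with $a\neq 0$, so we take $B=A$ and $Y=I_1$.

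For the inductive step, I would first look for an invertible $1\times 1$ principal submatrix. If some diagonal entry $a_{ii}$ is nonzero, a symmetric permutation congruence brings it to position $(1,1)$, so that we may write $A=\left[\begin{smallmatrix}a & v^T\\ v & A'\end{smallmatrix}\right]$ with $a\in\mathbb{F}\setminus\{0\}$. The standard Schur-complement-style elimination, effected by the congruence with $M=\left[\begin{smallmatrix}1 & -a^{-1}v^T\\ 0 & I_{m-1}\end{smallmatrix}\right]$, reduces $A$ to $\operatorname{diag}(a,\, A'-a^{-1}vv^T)$, where the bottom-right block is symmetric of rank $r-1$. Applying the inductive hypothesis to this smaller block (or stopping once its rank drops to $0$) and assembling the congruence matrices completes this subcase.

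The hard part will be the subcase where every diagonal entry of $A$ vanishes. This is the essential obstruction present in characteristic $2$, since otherwise one could simply add column $j$ to column $i$ (together with the symmetric row operation) to create a nonzero diagonal entry and reduce to the previous subcase. To handle arbitrary characteristic uniformly, my plan is to pivot instead on an invertible $2\times 2$ principal block. Since the rank is positive, some off-diagonal entry $a_{ij}=a\neq 0$ exists with $i\neq j$; a symmetric permutation brings the block $P=\left[\begin{smallmatrix}0 & a\\ a & 0\end{smallmatrix}\right]$ to the upper-left corner, and $P$ is symmetric and invertible because $\det P=-a^2\neq 0$ in every characteristic. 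Writing the permuted matrix as $\left[\begin{smallmatrix}P & Q^T\\ Q & A'\end{smallmatrix}\right]$ and applying the congruence by $\left[\begin{smallmatrix}I_2 & -P^{-1}Q^T\\ 0 & I_{m-2}\end{smallmatrix}\right]$ produces $\operatorname{diag}(P,\,A'-QP^{-1}Q^T)$, where the bottom-right block is symmetric of rank $r-2$. Induction then applies to that block.

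After finitely many iterations, the accumulated congruences compose to an invertible $Y_0\in\mathbb{F}^{m\times m}$ with $Y_0^T A Y_0=\operatorname{diag}(B,\,0_{m-r})$, where $B\in\mathbb{F}^{r\times r}$ is block-diagonal with invertible symmetric $1\times 1$ and $2\times 2$ pivot blocks along its diagonal—hence $B$ is itself invertible and symmetric. Setting $Y=Y_0^{-1}$ yields the factorization (\ref{RankFactorizationFormula}), with the bordering zero blocks absent precisely when $r=m$ (so that no reduction steps leave any residual block).
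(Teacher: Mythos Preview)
The paper does not supply its own proof of this lemma; it simply cites it as well-known, pointing to \cite[Theorem 1.1 and 1.1$'$]{66KP} and \cite{72PP}. Your argument by symmetric Gaussian elimination---pivoting on a nonzero diagonal entry when one exists, and otherwise on an invertible $2\times 2$ principal block $\left[\begin{smallmatrix}0 & a\\ a & 0\end{smallmatrix}\right]$---is exactly the standard congruence reduction that those references use, and it is correct over an arbitrary field.

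One small point you leave implicit: in the subcase where every diagonal entry of $A$ vanishes, you should note that the rank cannot be $1$. Indeed, a rank-one symmetric matrix over any field has the form $c\,xx^T$ with $c\neq 0$ and $x\neq 0$, so some diagonal entry $cx_i^2$ is nonzero. Hence $r\geq 2$ in that subcase, the $2\times 2$ pivot is always available, and the residual block legitimately has rank $r-2\geq 0$. With that remark added, the induction closes cleanly.
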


The next two lemmas are well-known (see, for instance, \cite[p. 17, Lemma 8]{20SW} and \cite[p. 21, Proposition 12]{20SW} in the field $\mathbb{C}$ although the proof is the same for any field $\mathbb{F}$).
\begin{lemma}[Shorted matrices are Schur complements]\label{LemShortedMatrSchuCompl}
If $\mathbb{F}$ is a field and $B=[B_{ij}]_{i,j=1,2}\in
\mathbb{F}
^{(r+k)\times (r+k)}$ with $B_{22}\in
\mathbb{F}
^{k\times k}$ invertible and $B/B_{22}\in
\mathbb{F}
^{r\times r}$ then the direct sum $B/B_{22} \oplus 0_l\in\mathbb{F}^{\left(r+l\right)  \times\left(r+l\right)}$ is a Schur complement
\begin{align}
C/C_{22}&=B/B_{22} \oplus 0_l=
\begin{bmatrix}
B/B_{22} & 0\\
0 & 0_l
\end{bmatrix},
\end{align}
where $C\in
\mathbb{F}
^{\left(  r+l+k\right)  \times\left(  r+l+k\right)  }$ is a $3\times3$ block matrix with the following block partitioned structure $C=[C_{ij}]_{i,j=1,2}$: 
\begin{align} 
C & =
\left[\begin{array}{c;{2pt/2pt} c c}
C_{11} & C_{12} \\ \hdashline[2pt/2pt]
C_{21} & C_{22}
\end{array}\right]
=
\left[\begin{array}{c c; {2pt/2pt} c}
B_{11} & 0 & B_{12} \\
0 & 0_l & 0 \\ \hdashline[2pt/2pt]
B_{21} & 0 & B_{22}
\end{array}\right],\label{LemShortedMatricesAreSchurComplMatrixC}
\end{align}
and $C_{22}=B_{22}$ is invertible. Moreover, if the matrix $B$ is symmetric then the matrix $C$ is symmetric.
\end{lemma}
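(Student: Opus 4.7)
The plan is a direct block-matrix computation. First I would read off the four blocks of $C$ from the partition displayed in \eqref{LemShortedMatricesAreSchurComplMatrixC}: the $(1,1)$-block is the $(r+l)\times(r+l)$ matrix
\[
C_{11}=\begin{bmatrix} B_{11} & 0 \\ 0 & 0_l \end{bmatrix},
\]
the off-diagonal blocks are
\[
C_{12}=\begin{bmatrix} B_{12} \\ 0 \end{bmatrix}, \qquad C_{21}=\begin{bmatrix} B_{21} & 0 \end{bmatrix},
\]
and by construction $C_{22}=B_{22}$, which is invertible by hypothesis. In particular the Schur complement $C/C_{22}=C_{11}-C_{12}C_{22}^{-1}C_{21}$ is well-defined.

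Next I would carry out the product $C_{12} C_{22}^{-1} C_{21}$ by straightforward block multiplication. Since $C_{12}$ has a zero lower block and $C_{21}$ has a zero right block, it collapses to
\[
C_{12} C_{22}^{-1} C_{21} = \begin{bmatrix} B_{12} B_{22}^{-1} B_{21} & 0 \\ 0 & 0 \end{bmatrix},
\]
with the trailing zero block of size $l\times l$. Subtracting from $C_{11}$ then gives
\[
C/C_{22} = \begin{bmatrix} B_{11} - B_{12} B_{22}^{-1} B_{21} & 0 \\ 0 & 0_l \end{bmatrix} = (B/B_{22}) \oplus 0_l,
\]
which is exactly the identity claimed.

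For the symmetry statement, if $B$ is symmetric then $B_{11}^T = B_{11}$, $B_{22}^T = B_{22}$, and $B_{12}^T = B_{21}$. Viewing $C$ as the $3\times 3$ block matrix in \eqref{LemShortedMatricesAreSchurComplMatrixC} and comparing each block to its transposed counterpart, every diagonal block is symmetric and each off-diagonal block is the transpose of its mirror, so $C^T = C$.

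Since every step is a routine block-matrix manipulation that is valid over any field $\mathbb{F}$ (no inversions other than the hypothesized one, no divisions, no characteristic assumptions), I do not anticipate a genuine obstacle. The only point that requires care is bookkeeping the sizes of the zero blocks so that each block product is conformable; once the partition is written down explicitly the remaining verification is mechanical.
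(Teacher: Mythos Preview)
Your proof is correct and is exactly the standard direct block-computation one would expect; the paper itself does not give a proof of this lemma but cites it as well-known from \cite{20SW}, and your argument is precisely the routine verification that reference would contain.
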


\begin{lemma}[Matrix multiplication of a Schur complement]\label{LemMatrixMultOfSchurCompl}
If $\mathbb{F}$ is a field and $C=[C_{ij}]_{i,j=1,2}\in
\mathbb{F}
^{(m+k)\times (m+k)}$ with $C_{22}\in
\mathbb{F}
^{k\times k}$ invertible and $C/C_{22}\in
\mathbb{F}
^{m\times m}$ then, for any matrices $X\in \mathbb{C}^{s\times m}$ and $Y\in \mathbb{C}^{m\times s}$,
\begin{align}
D/D_{22}=X(C/C_{22})Y,
\end{align}
where $D\in \mathbb{C}^{(s+k)\times (s+k)}$ is the $2\times 2$ block matrix
\begin{align}
D=\begin{bmatrix}
D_{11} & D_{12}\\
D_{21} & D_{22}
\end{bmatrix}= \begin{bmatrix}
XC_{11}Y & XC_{12}\\
C_{21}Y & C_{22}
\end{bmatrix}=\begin{bmatrix}
X & 0\\
0 & I_{k}
\end{bmatrix}\begin{bmatrix}
C_{11} & C_{12}\\
C_{21} & C_{22}
\end{bmatrix}\begin{bmatrix}
Y & 0\\
0 & I_{k}
\end{bmatrix}\label{PropMatrixMultOfSchurComplCMatrix}
\end{align}
and $D_{22}=C_{22}$ is invertible. Moreover, if $C$ is symmetric and $X=Y^T$ then $D$ is symmetric.
\end{lemma}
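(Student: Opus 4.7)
The plan is to verify the identity $D/D_{22}=X(C/C_{22})Y$ by direct computation from the definition of the Schur complement, and then to check the symmetry claim by inspecting the four blocks of $D$.

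First I would read off the blocks of $D$ from the given formula (\ref{PropMatrixMultOfSchurComplCMatrix}), namely $D_{11}=XC_{11}Y$, $D_{12}=XC_{12}$, $D_{21}=C_{21}Y$, and $D_{22}=C_{22}$. Since $C_{22}$ is assumed invertible, so is $D_{22}$, and the Schur complement $D/D_{22}$ is defined. Applying the definition directly gives
\begin{align*}
D/D_{22}
=D_{11}-D_{12}D_{22}^{-1}D_{21}
=XC_{11}Y-XC_{12}C_{22}^{-1}C_{21}Y
=X\bigl(C_{11}-C_{12}C_{22}^{-1}C_{21}\bigr)Y
=X(C/C_{22})Y,
\end{align*}
which is the desired identity. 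This establishes the first part of the lemma with essentially no obstacle beyond bookkeeping of block sizes (one should note $XC_{11}Y\in\mathbb{F}^{s\times s}$, $XC_{12}\in\mathbb{F}^{s\times k}$, $C_{21}Y\in\mathbb{F}^{k\times s}$, which confirms the $2\times 2$ block partition of $D$ is well-formed).

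For the symmetry claim, I would assume $C$ is symmetric (so $C_{11}^T=C_{11}$, $C_{22}^T=C_{22}$, and $C_{21}^T=C_{12}$) and $X=Y^T$. Then $D_{22}=C_{22}$ is symmetric, $D_{11}=Y^TC_{11}Y$ is symmetric by a standard congruence argument, and
\begin{align*}
D_{21}^T=(C_{21}Y)^T=Y^TC_{21}^T=Y^TC_{12}=XC_{12}=D_{12},
\end{align*}
so $D$ is symmetric block-by-block. Alternatively, one can observe directly from the factorization on the right of (\ref{PropMatrixMultOfSchurComplCMatrix}) that $D=Z^TCZ$ where $Z=\operatorname{diag}(Y,I_k)$, which is manifestly symmetric whenever $C$ is.

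I do not expect any serious obstacle; the proof is a direct consequence of the definition of the Schur complement and the block factorization of $D$, and the field $\mathbb{F}$ and its characteristic play no role. The only minor care needed is to keep track of the block dimensions so that the product $XC_{12}C_{22}^{-1}C_{21}Y$ is well-defined as an $s\times s$ matrix, which follows immediately from the hypotheses $X\in\mathbb{F}^{s\times m}$, $Y\in\mathbb{F}^{m\times s}$, and $C/C_{22}\in\mathbb{F}^{m\times m}$.
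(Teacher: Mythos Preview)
Your proof is correct: the identity $D/D_{22}=X(C/C_{22})Y$ follows by a one-line expansion of the Schur complement definition, and the symmetry claim is immediate either blockwise or from the congruence $D=Z^TCZ$ with $Z=Y\oplus I_k$. The paper does not actually supply its own proof of this lemma; it is stated as well-known with a citation to \cite{20SW}, so there is nothing to compare against beyond noting that your direct computation is exactly the standard argument one would expect that reference to contain.
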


\begin{theorem}[Symmetric determinantal representation]\label{ThmSymmDetReprExt}
Let $\mathbb{F}$ be a field of characteristic different from $2$. Then any polynomial $p(z)\in\mathbb{F}[z]$ in $n$ variables $z=(z_1,\ldots, z_n)$ has a symmetric determinant representation, i.e., there exists an affine linear matrix pencil $A_0+\sum_{i=1}^nz_iA_i$ with symmetric matrices $A_0,\ldots, A_n\in \mathbb{F}^{m\times m}$ such that
\begin{align}
p(z)=\det \left(A_0+\sum_{i=1}^nz_iA_i\right).
\end{align}
\end{theorem}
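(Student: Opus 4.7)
The plan is to mirror the proof of Theorem \ref{ThmSymmDetRepr}, working over $\mathbb{F}$ instead of $\mathbb{R}$. As emphasised in Subsection \ref{SecPreviousResults}, Lemmas \ref{LemDetDirectSum}, \ref{LemSchursDetFormula}, \ref{LemSumOfSchurComps}, and \ref{LemmaPolyRealization} already hold over an arbitrary field, and the proofs of Lemmas \ref{LemScalarMultDetSymDetRepr} and \ref{LemSubSymDetRep} invoke no other ingredients beyond these together with Lemma \ref{LemRealizSimpleProducts}. Hence the full argument transplants to $\mathbb{F}$ the moment one establishes the $\mathbb{F}$-analogue of Lemma \ref{LemRealizSimpleProducts}: for every symmetric $B\in\mathbb{F}^{m\times m}$, the matrix polynomial $uvB$ admits a Bessmertny\u{\i} realization via a symmetric affine linear pencil whose $(2,2)$-block is a constant invertible matrix.

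First, for symmetric and \emph{invertible} $B\in\mathbb{F}^{m\times m}$, I would reuse the real-case construction. The only algebraic fact it needs is the polarization identity $4uv=(u+v)^2-(u-v)^2$, which is available because $\mathrm{char}(\mathbb{F})\neq 2$. Concretely, the symmetric affine linear pencil
\[
A(u,v)=\begin{bmatrix} 0 & \tfrac{1}{2}(u+v)I_m & \tfrac{1}{2}(u-v)I_m \\ \tfrac{1}{2}(u+v)I_m & -B^{-1} & 0 \\ \tfrac{1}{2}(u-v)I_m & 0 & B^{-1} \end{bmatrix}
\]
has constant invertible $(2,2)$-block $(-B^{-1})\oplus B^{-1}$, and Schur's determinant formula combined with the polarization identity gives $A(u,v)/A_{22}(u,v)=uvB$.

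The main obstacle will be the \emph{non-invertible} case, since the $\mathbb{R}$-trick of replacing $B$ by $B-\lambda_0 I_m$ for a generic scalar $\lambda_0$ fails over a finite field, where every nonzero element of $\mathbb{F}$ may be an eigenvalue of $B$ (as the excerpt illustrates explicitly). To sidestep this, I would first apply Lemma \ref{LemRankCongruenceFactorization} to write $B=Y^{T}(B'\oplus 0_{m-r})Y$ with $Y\in\mathbb{F}^{m\times m}$ invertible and $B'\in\mathbb{F}^{r\times r}$ symmetric and invertible, where $r=\operatorname{rank}B$. Applying the invertible case just established to $B'$ yields a symmetric pencil $A'(u,v)$ with constant invertible $(2,2)$-block satisfying $A'(u,v)/A'_{22}(u,v)=uvB'$. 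Lemma \ref{LemShortedMatrSchuCompl} then promotes this to a symmetric realization of $uvB'\oplus 0_{m-r}=uv(B'\oplus 0_{m-r})$, and Lemma \ref{LemMatrixMultOfSchurCompl}, applied with $X=Y^{T}$ so that symmetry is preserved, converts the result into a realization of $Y^{T}(uv(B'\oplus 0_{m-r}))Y=uvB$, as required.

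With the $\mathbb{F}$-analogue of Lemma \ref{LemRealizSimpleProducts} in hand, Lemmas \ref{LemScalarMultDetSymDetRepr} and \ref{LemSubSymDetRep} go through verbatim over $\mathbb{F}$, and the theorem follows by the same induction used in the proof of Theorem \ref{ThmSymmDetRepr}: if $p$ is itself affine linear, then $p=\det[p(z)]$; otherwise Lemma \ref{LemmaPolyRealization} writes $p=S_\ell\cdots S_1 q$ for some affine linear $q\in\mathbb{F}[z]$ and simple product substitutions $S_1,\ldots,S_\ell$, and a finite induction on $k$ using the $\mathbb{F}$-version of Lemma \ref{LemSubSymDetRep} upgrades a symmetric determinantal representation of $S_k\cdots S_1 q$ to one of $S_{k+1}\cdots S_1 q$, yielding the desired representation of $p$.
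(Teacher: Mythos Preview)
Your proposal is correct and follows essentially the same route as the paper: reduce everything to an $\mathbb{F}$-version of Lemma \ref{LemRealizSimpleProducts}, keep the polarization-based construction for invertible $B$, and for non-invertible $B$ replace the eigenvalue-shift trick by the rank factorization of Lemma \ref{LemRankCongruenceFactorization} followed by Lemmas \ref{LemShortedMatrSchuCompl} and \ref{LemMatrixMultOfSchurCompl}. The only cosmetic omission is the trivial case $B=0$, which the paper disposes of in one line and which your rank-factorization step implicitly excludes since Lemma \ref{LemRankCongruenceFactorization} assumes $r\geq 1$.
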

\begin{proof}
The proof of this theorem is the same as the proof of the case when the field is $\mathbb{R}$ (i.e., the same proof as we gave for Theorem \ref{ThmSymmDetRepr}, but just replace the field $\mathbb{R}$ in that proof with the field $\mathbb{F}$ of characteristic different from $2$) except we have to make one change to the proof of Lemma \ref{LemRealizSimpleProducts} in the Appendix, namely, for the case when the symmetric matrix $B\in\mathbb{F}^{m\times m}$ is not invertible we need to prove Lemma \ref{LemRealizSimpleProducts} for the matrix polynomial $uvB$. If $B=0$ then the proof is immediate. Thus, assume $B\not=0$. In this case, we prove Lemma \ref{LemRealizSimpleProducts} by first applying Lemma \ref{LemRankCongruenceFactorization} to the matrix $B$ to get the factorization
\begin{align}
    uvB=Y^T\begin{bmatrix}
    uvB_1 & 0\\
    0 & 0_{m-r}
    \end{bmatrix}Y,
\end{align}
where $Y\in\mathbb{F}^{m\times m}$ and $B_1\in\mathbb{F}^{r\times r}$ are invertible matrices and $B_1$ is symmetric. Now we can appeal to the first part of the proof of Lemma \ref{LemRealizSimpleProducts} applied to the matrix polynomial $uvB_1$ and then use Lemma \ref{LemShortedMatrSchuCompl} followed by Lemma \ref{LemMatrixMultOfSchurCompl} to prove Lemma \ref{LemRealizSimpleProducts} for the matrix polynomial $uvB$. This proves the theorem.
\end{proof}

\section{Appendix}\label{SecAppendix}
The following lemma is well-known (see, for instance, \cite[p. 16, Proposition 7]{20SW} although the statement and proof is the valid for any field $\mathbb{F}$ including $\mathbb{R}$).
\begin{lemma}[Sum of two Schur complements]\label{LemSumSchurComp} If $A\in
\mathbb{R}
^{m\times m}$ and $B\in
\mathbb{R}
^{n\times n}$ are $2\times2$ block matrices
\[A=\begin{bmatrix}
A_{11} & A_{12}\\
A_{21} & A_{22}
\end{bmatrix}
,\text{ }B=
\begin{bmatrix}
B_{11} & B_{12}\\
B_{21} & B_{22}
\end{bmatrix}
\]
such that $A_{22}\in
\mathbb{R}
^{p\times p}$, $B_{22}\in
\mathbb{R}
^{q\times q}$ are invertible and $A/A_{22}$, $B/B_{22}\in
\mathbb{R}
^{k\times k}$ then
\begin{align}
C/C_{22}=A/A_{22}+B/B_{22},
\end{align}
where $C\in
\mathbb{R}
^{\left(  k+p+q\right)  \times\left(  k+p+q\right)  }$ is the $3\times3$ block
matrix with the following block partitioned structure $C=[C_{ij}]_{i,j=1,2}$:
\begin{equation}\label{SumOfSchurCompsCMatrix}
C=
\left[\begin{array}{c;{2pt/2pt} c c}
C_{11} & C_{12} \\ \hdashline[2pt/2pt]
C_{21} & C_{22}
\end{array}\right]
=
\left[\begin{array}{c;{2pt/2pt} c c}
A_{11}+B_{11} & A_{12} & B_{12} \\ \hdashline[2pt/2pt]
A_{21} & A_{22} & 0\\
B_{21} & 0 & B_{22}
\end{array}\right],
\end{equation}
and
\begin{equation}\label{SumOfSchurCompsC22Matrix}
C_{22}=
\begin{bmatrix}
A_{22} & 0\\
0 & B_{22}
\end{bmatrix}
\end{equation}
is invertible. Moreover, if both matrices $A$ and $B$ are symmetric then the matrix $C$ is also symmetric.
\end{lemma}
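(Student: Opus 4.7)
The statement is a direct block-matrix identity, so the plan is a routine verification followed by a symmetry check. The main work is computing $C/C_{22}$ explicitly from the partition given in $(\ref{SumOfSchurCompsCMatrix})$ and matching it against $A/A_{22}+B/B_{22}$.

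First I would check that $C_{22}$ is invertible: since $C_{22}$ is block diagonal with diagonal blocks $A_{22}$ and $B_{22}$, both assumed invertible, it is invertible with inverse
\[
C_{22}^{-1}=\begin{bmatrix}A_{22}^{-1} & 0 \\ 0 & B_{22}^{-1}\end{bmatrix}.
\]
This is the only nontrivial preliminary, and it follows from the block-diagonal structure that is made explicit in $(\ref{SumOfSchurCompsC22Matrix})$.

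Next I would read off the remaining blocks of $C$ from $(\ref{SumOfSchurCompsCMatrix})$, namely $C_{11}=A_{11}+B_{11}$, $C_{12}=\begin{bmatrix}A_{12} & B_{12}\end{bmatrix}$, and $C_{21}=\begin{bmatrix}A_{21}\\ B_{21}\end{bmatrix}$, and then compute
\[
C_{12}C_{22}^{-1}C_{21}=\begin{bmatrix}A_{12} & B_{12}\end{bmatrix}\begin{bmatrix}A_{22}^{-1} & 0 \\ 0 & B_{22}^{-1}\end{bmatrix}\begin{bmatrix}A_{21}\\ B_{21}\end{bmatrix}=A_{12}A_{22}^{-1}A_{21}+B_{12}B_{22}^{-1}B_{21}.
\]
Subtracting this from $C_{11}=A_{11}+B_{11}$ and regrouping yields $C/C_{22}=(A_{11}-A_{12}A_{22}^{-1}A_{21})+(B_{11}-B_{12}B_{22}^{-1}B_{21})=A/A_{22}+B/B_{22}$, which is the claimed identity.

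For the symmetry assertion I would assume $A$ and $B$ are symmetric, so that $A_{11},A_{22},B_{11},B_{22}$ are symmetric and $A_{21}=A_{12}^{T}$, $B_{21}=B_{12}^{T}$. Then in the partition $(\ref{SumOfSchurCompsCMatrix})$ the $(1,1)$ block $A_{11}+B_{11}$ is symmetric, the $(2,2)$ block is block-diagonal with symmetric diagonal blocks, and the off-diagonal block $C_{21}$ is precisely the transpose of $C_{12}$; the two zero blocks in the $(2,3)$ and $(3,2)$ positions match as well, so $C=C^{T}$. The only potential obstacle is keeping the block-index bookkeeping straight between the $2\times 2$ description $C=[C_{ij}]_{i,j=1,2}$ and the explicit $3\times 3$ partition, but this is purely notational.
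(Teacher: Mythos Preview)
Your proof is correct and is exactly the standard direct verification one would expect. Note that the paper does not actually supply a proof of this lemma; it merely states it as well-known and cites \cite[p.~16, Proposition~7]{20SW}, so there is no substantive comparison to make beyond observing that your argument is the routine block computation that any cited proof would also carry out.
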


\begin{proof}[Proof of Lemma \ref{LemRealizSimpleProducts}]\label{PrfLemRealizSimpleProducts}
Let $B\in\mathbb{R}^{m\times m}$ be a symmetric matrix and consider the matrix polynomial $uvB$ in the two variables $u$ and $v$. Consider first the case that $B$ is invertible. If $u=v$ then we have
\begin{align}
    u^2B =\left.\left[\begin{array}{c;{2pt/2pt} c}
0 & uI_m \\ \hdashline uI_m & -B^{-1}
\end{array}\right]\right/ \left[-B^{-1}\right],
\end{align} 
where $I_m$ is the $m\times m$ identity matrix in $\mathbb{R}^{m\times m}$. It follows from this that if $u$ and $v$ are independent variables then 
\begin{align}
    \left(\frac{1}{2}(u \pm v)\right)^2= \left.\left[\begin{array}{c;{2pt/2pt} c}
0 & \frac{1}{2}(u\pm v)I_m \\ \hdashline \frac{1}{2}(u\pm v)I_m & -B^{-1}
\end{array}\right]\right/ \left[-B^{-1}\right]
\end{align}
and hence by Lemma \ref{LemSumSchurComp},
\begin{gather}
    uvB=\left(\frac{1}{2}(u + v)\right)^2B+\left(\frac{1}{2}(u - v)\right)^2(-B)\\
    =\left.\left[\begin{array}{c;{2pt/2pt} c c}
0 & \frac{1}{2}(u+ v)I & \frac{1}{2}(u- v)I\\ \hdashline
\frac{1}{2}(u+ v)I & -B^{-1} & 0  \\
\frac{1}{2}(u- v)I & 0 &  B^{-1}
\end{array}\right]\right/ \begin{bmatrix}
-B^{-1} & 0 \\ 0 &  B^{-1}\end{bmatrix}.
\end{gather}
This proves the lemma in the case that $B$ is invertible. Now suppose $B$ is not invertible. Then there exists a $\lambda_{0}\in\mathbb{R}\setminus\{0\}$ which is not an eigenvalue of $B$. Let $B_1=B-\lambda_{0}I_{m}$ and $B_2=\lambda_{0}I_{m}$. Therefore, since both $B_1$ and $B_2$ are invertible symmetric matrices in $\mathbb{R}^{m\times m}$ and $uvB =uvB_1+uvB_2$, the proof then follows immediately from Lemma \ref{LemSumSchurComp}.
\end{proof}

\begin{proof}[Proof of Lemma \ref{LemmaPolyRealization}]
First, Lemma \ref{LemmaPolyRealization} is obviously true for any affine linear real polynomial, i.e., for any real polynomial of degree less than or equal to one. We will now prove it is true for any real polynomial of degree $2$. Let $n\in\mathbb{N}$, $a_0,\ldots, a_{n}, b_1,\ldots, b_{\frac{n(n+1)}{2}}$ be real scalars, and $z_1,\ldots, z_n, w_1,\ldots, w_{\frac{n(n+1)}{2}}$ be independent variables. Consider the affine linear real polynomial
\begin{align*}
    q(z,w)=a_0+\sum_{l=1}^{n} a_lz_l+\sum_{k=1}^{\frac{n(n+1)}{2}} b_kw_k.
\end{align*}
Let $S_{k}=|_{w_k=z_iz_j}$ denote the operation of simple product substitution of $w_k=z_iz_j$, for all integer pairs $i,j$ with $1\leq i\leq j\leq n$ with integers $k=1,\ldots, \frac{n(n+1)}{2}$ ordered by the lexicographical order on the pairs $(i,j)$. Applying the operations consecutively of $S_1,\ldots, S_{\frac{n(n+1)}{2}}$ to $q$ we get the polynomial  
\begin{align*}
    S_{\frac{n(n+1)}{2}}\cdots S_1q=a_0+\sum_{l=1}^{n} a_lz_l+\sum_{k=1}^{\frac{n(n+1)}{2}} b_kz_iz_j.
\end{align*}
This proves the lemma for any real polynomial of degree $2$. Suppose the lemma is true for all real polynomials of degree less than or equal to $d$ for some natural number $d\geq 2$.  We will now prove the lemma is true for any real polynomial of degree $d+1$. Let $n\in\mathbb{N}$, $b_1,\ldots, b_{M_{n,d+1}}$ be real scalars, and $z_1,\ldots, z_n, w_1,\ldots, w_{M_{n,d+1}}$ be independent variables, where $M_{n,d+1}$ is the number of monomials of degree $d+1$ in $n$ independent variables. To be explicit, it is a well-known result that this number is given by the following formula
\begin{align*}
    M_{n,d+1}=\binom{n+d}{d+1}=\frac{(n+d)!}{(d+1)!(n-1)!}.
\end{align*}
Let $q_d(z)$ be a real polynomial of degree less than or equal to $d$ in the $n$ independent variables $z=(z_1,\ldots, z_n)$. Consider the real polynomial $q(z,w)$ of degree less than or equal to $d$ in the $n+M_{n,d+1}$ independent variables $z_1,\ldots, z_n,w_1,\ldots, w_{M_{n,d+1}}$ defined by
\begin{align*}
    r(z,w)=q_d(z)+\sum_{k=1}^{M_{n,d+1}}b_kz^{\alpha_k}w_k,
\end{align*}
where the set $\{z^{\alpha_k}:k=1,\ldots, M_{n,d+1}\}$ equals the set of all monomials in the $n$ variables $z_1,\ldots, z_n$ of degree $d-1$ such that $z^{\alpha_k}z_iz_j$, $k=1,\ldots, M_{n,d+1}, 1\leq i\leq j\leq n$ is a list, with no repeats, of all the monomials in the $n$ variables $z_1,\ldots, z_n$ of degree $d+1$. By the induction hypothesis, there exists a affine linear real polynomial $q$ and a finite number $T_l$, $l=1,\ldots M$ of simple product substitution operations such that
\begin{align*}
    r(z,w)=T_M\cdots T_1 q.
\end{align*}
Let $S_{k}=|_{w_k=z_iz_j}$ denote the operation of simple product substitution of $w_k=z_iz_j$, for all integer pairs $i,j$ with $1\leq i\leq j\leq n$ with integers $k=1,\ldots, \frac{n(n+1)}{2}$ ordered by the lexicographical order on the pairs $(i,j)$. Applying the operations consecutively of $S_1,\ldots, S_{\frac{n(n+1)}{2}}$ to $r(z,w)$ we get the polynomial  
\begin{align*}
    S_{\frac{n(n+1)}{2}}\cdots S_1 r
    = S_{\frac{n(n+1)}{2}}\cdots S_1T_M\cdots T_1q=q_d(z)+\sum_{k=1}^{M_{n,d+1}}b_kz^{\alpha_k}z_iz_j=p(z).
\end{align*}
This proves the lemma for any real polynomial of degree $d+1$. Therefore, by induction the lemma is true for any real polynomial of any degree. This proves the lemma.
\end{proof}

\end{document}